\newcommand{\addtotextwidth}[1]{%
  \setlength{\hoffset}{.5\textwidth}
  \addtolength{\textwidth}{#1}
  \addtolength{\hoffset}{-.5\textwidth}}
\newcommand{\addtotextheight}[1]{%
  \setlength{\voffset}{.5\textheight}
  \addtolength{\textheight}{#1}
  \addtolength{\voffset}{-.5\textheight}}
\theoremstyle{plain}
\newtheorem{theorem}{Theorem}[section]
\newtheorem{proposition}[theorem]{Proposition}
\newtheorem{corollary}[theorem]{Corollary}
\newtheorem{lemma}[theorem]{Lemma}
\theoremstyle{definition}
\newtheorem{definition}[theorem]{Definition}
\newtheorem{example}[theorem]{Example}
\newtheorem{examples}[theorem]{Examples}
\newtheorem{remark}[theorem]{Remark}
\newtheorem{remarks}[theorem]{Remarks}
\newlength{\itemsmargin}
\newlength{\itemslabelwidth}
\newlength{\itemslabelsep}
\newlength{\itemstopsep}
\newlength{\itemsitemsep}
\newenvironment{items}%
  {\begin{list}{}{\setlength{\topsep}{\itemstopsep}
                  \setlength{\leftmargin}{\itemsmargin}
                  \setlength{\labelwidth}{\itemslabelwidth}
		  \setlength{\labelsep}{\itemslabelsep}
		  \setlength{\itemindent}{0pt}
                  \setlength{\listparindent}{.75\parindent}
                  \setlength{\itemsep}{\itemsitemsep}
                  \setlength{\parsep}{0pt}
                  \setlength{\parskip}{0pt}}}
  {\end{list}}
\newcommand{\notion}[1]{{\bfseries #1}}
\renewcommand{\iff}{if~and only~if}
\newcommand{\tinysp}{\mspace{1mu}}
\newcommand{\dtinysp}{\mspace{2mu}}
\newcommand{\NN}{\mathbb{N}}    % natural numbers
\newcommand{\QQ}{\mathbb{Q}}    % rational numbers
\newcommand{\Eq}{\:=\:}
\newcommand{\defeq}{\mathrel{\overset{\text{def}}{\tinysp=\!=\tinysp}}}
\renewcommand{\geq}{\geqslant}
\newcommand{\set}[1]{\{ #1 \}}                  % (listed) set
\newcommand{\nothing}{}
\newcommand{\setparenarr}{\let\larr(\let\rarr)\let\arrmarg\nothing}
\newcommand{\setanglearr}{\let\larr\langle\let\rarr\rangle\let\arrmarg\tinysp}
\newcommand{\arr}[1]{\larr\arrmarg #1 \arrmarg\rarr}         % (listed) array
\newcommand{\tuple}{\arr}
\newcommand{\overlinepart}[2][1]%
\newcommand{\underlinepart}[2][1]%
  {{\setbox0=\hbox{$#2$}\underaccent{\rule{0pt}{.18ex}\rule{#1\wd0}{0.0618ex}}{#2}}}
\newcommand{\bigtruthord}[1]%
  {\text{\larger$\boldsymbol{[}$}\dtinysp{#1}\dtinysp\text{\larger$\boldsymbol{]}$}}
\newcommand{\norm}[1]{\mathopen{\|}#1\mathclose{\|}}	% vector space norm
\newcommand{\bignorm}[1]{\bigl\|#1\bigr\|}
\newcommand{\anon}{\mathord{\halftinysp\rule[0.5ex]{0.5em}{0.5pt}\halftinysp}}
\newcommand{\farref}[1]{?.?}
\newcommand{\twohyphs}%
	{\mathord{\dtinysp\raisebox{.15ex}{\text{-}}\dtinysp\raisebox{.15ex}{\text{-}}\dtinysp}}
\newtheoremstyle{myplain}% name
  {0pt}%      Space above
  {0pt}%      Space below
  {\itshape}%         Body font
\newtheoremstyle{mydefinition}% name
  {0pt}%      Space above
  {0pt}%      Space below
  {}%         Body font
  {}%         Indent amount (empty = no indent, \parindent = para indent)
\renewcommand*\env@matrix[1][*\c@MaxMatrixCols c]{%
  \hskip -\arraycolsep
  \let\@ifnextchar\new@ifnextchar
  \array{#1}}
\unskip\end{list}}
\theoremstyle{myplain}
\newtheorem{theorem}{Theorem} %[section]
\newtheorem{corollary}[theorem]{Corollary}
\theoremstyle{mydefinition}
\newtheorem{remark}{Remark}
\qed\end{remark}}
\renewenvironment{proof}[1][Proof.]
	{\noindent{\bfseries #1}\hspace{.75em}\ignorespaces}
	{\hspace{\stretch{1}}\qed}
\newenvironment{proof*}[1][Proof.]
	{\noindent{\bfseries #1}\hspace{.75em}\ignorespaces}
	{\par}
\newcommand{\thmskip}{\bigskip\vspace{2ex}}
\newcommand{\interskip}{\bigskip}
\newcommand{\inskip}{\medskip}
\renewcommand{\emph}[1]{\textsl{#1\/}}
\renewcommand{\anon}{\mathord{\tinysp\rule[0.5ex]{0.5em}{0.5pt}\tinysp}}
\renewcommand{\defeq}{:=}
\newcommand{\pure}[1]%
	{\accentset{\raisebox{-0.25ex}[0pt][0pt]{$\smash{\scriptscriptstyle\rightharpoonup}$}}{#1}}
\newcommand{\bigtxtmtx}[1]%
	{\raisebox{-0.1ex}{\Large\boldmath$[$}#1\raisebox{-0.1ex}{\Large\boldmath$]$}}
\newcommand{\microrightharpoonup}{\includegraphics[hiresbb=true]{mp/micro-rightharpoonup-1.mps}}
\newcommand{\scriptpure}[1]%
	{\accentset{\raisebox{-0.25ex}[0pt][0pt]{\microrightharpoonup}}{#1}}
\newcommand{\bilinform}[2]{\left\langle#1,#2\right\rangle}
\begin{document}

%----------------------------------------------------------------------------------------------------
\title{\vspace{-7ex}
	\LARGE\bfseries Euclidean quadratic forms are ADC forms:\\
			a short proof\\[2ex]}

\author{\large\bfseries France Dacar, Jo\v{z}ef Stefan Institute\\
	{\large\ttfamily France.Dacar@ijs.si}}

%\date{\large\bfseries Written February 11\:--\;March 28, 2013\\ Last edited \today\\[10ex]}
\date{\large\bfseries August 23, 2012\\
	Last edited \today\\[9.5ex]}

\hypersetup{
  pdftitle={Euclidean quadratic forms are ADC forms: a short proof},
  pdfauthor={France Dacar},
}

\maketitle

%----------------------------------------------------------------------------------------------------
\begin{abstract}
This note presents a short, transparent proof
of the theorem that every Euclidean quadratic form over a~normed integral domain
	is an Aubry-Davenport-Cassels form.
The theorem, as formulated in the note,
allows besides quadratic terms also linear and constant terms,
imposes no restrictions on the characteristic of the integral domain,
and makes no unnecessary assumptions about the norm.
\end{abstract}

%----------------------------------------------------------------------------------------------------
\section{\protect\rule{0pt}{4ex}Introduction}

In this note we state and prove a~sharpened version of Theorem~8 in~\cite{PeteLClark},
which says that every Euclidean quadratic form
over a~certain kind of a~normed integral domain is an ADC form;
this theorem is a generalization of the Davenport-Cassels lemma.
The sharpened theorem presented in this note allows for linear and constant terms
(so~that there is a~quadratic polynomial instead of a~quadratic form,
	albeit with a~Euclidean homogeneous quadratic~part),
imposes no restrictions on the characteristic of the integral domain,
and makes no unnecessary assumptions about the norm.
The theorem has an impressive provenance:
its central idea was generalized and clarified, in stages,
from Aubry through Cassels, Davenport, Weil, and Deligne to Serre.

\inskip

%----------------------------------------------------------------------------------------------------
\section{Definitions}

Let $R$ be a commutative ring with unity $1\neq 0$.

A~\notion{discrete multiplicative norm on~$R$} (shorter, a~\notion{norm on~$R$})
is a mapping $\norm{\anon}\colon R\to\NN$
that satisfies the following two conditions:
\begin{items}
\item[(N0)\:] For every $x\in R$, $\norm{x}=0$ \iff\ $x=0$.
\item[(N1)\:] For all $x,\,y\in R$, $\norm{x y}=\norm{x}\norm{y}$.
\end{items}
\pagebreak[3]
Since $\norm{1}\norm{1}=\norm{1\cdot1}=\norm{1}\neq 0$,
we~have~$\norm{1}=1$, thus $\norm{\anon}$ is a~homomorphism of multiplicative monoids,
and as such it maps every unit of~$R$
to the only invertible element $1$ of the multiplicative monoid~$\NN$.

Let $\norm{\anon}$ be a~discrete multiplicative norm on~$R$.

If $x$ and $y$ are any non-zero elements of $R$,
then $\norm{x y}=\norm{x}\norm{y}\neq 0$, thus $x y\neq 0$;
the ring $R$ is an integral domain.
Let $K$ be the field of fractions of~$R$.
The given norm $\norm{\anon}$ on~$R$
extends in a~unique way to a~mapping $\norm{\anon}\colon K\to\QQ_{\geq0}$
satisfying the condition~(N1) $\bigl($which then also satisfies the condition~(N0)$\bigr)$:
if $x=a/b$ with $a, b\in R$ and $b\neq 0$, then $\norm{x}=\norm{a}/\norm{b}$.
The extended mapping is a~\notion{multiplicative norm} on~$K$.

\bigskip

A~\notion{form over a ring $R$} is a~homogeneous polynomial in $R[X_1,\dots,X_d]$ ($d>0$),
where $X_i$ are formal variables;
we shall write $X=\tuple{X_1,\dots,X_d}$~etc.

Let $q\in R[X_1,\dots,X_d]$ be a~quadratic form.
The polynomial $\bilinform{X}{Y}_{\!q}\defeq q(X+Y)-q(X)-q(Y)\in R[X_1,\dots,X_d,Y_1,\dots,Y_d]$
is a~bilinear form said to be \notion{associated with~$q$}.
If $T$ is a~formal variable different from the formal variables $X_i$ and $Y_i$, then
\begin{equation*}
q(X+TY) \Eq q(X) + \bilinform{X}{Y}_{\!q}T + q(Y)\,T^2~;
\end{equation*}
it is the fact that the coefficient at $T^2$ is $q(Y)$
which we will find useful in the proof of the theorem below.

Let $R$ be an integral domain with the field of fractions $K$.

A~form $g$ over $R$, in $d$ variables, is said to be an~\notion{ADC form}%
\footnote{ADC stands for Aubry-Davenport-Cassels.}
if for every $x\in K^d$ at which $g(x)\in R$ there exists $y\in R^d$ such that $g(y)=g(x)$.

Let $\norm{\anon}$ be a~discrete multiplicative norm on~$R$,
uniquely extended to a~multiplicative norm on $K$ (still written~$\norm{\anon}$).
A~form $g$ over $R$, of any degree,
is said to be \notion{Euclidean with respect to the norm~$\norm{\anon}$}
if for every $x\in K^d\setminus R^d$ there exists $y\in R^d$
such that $0<\norm{g(x-y)}<1$.

\inskip

%----------------------------------------------------------------------------------------------------
\section{The theorem}

\inskip

\begin{theorem}\label{thm:generalized-Aubry-Davenport-Cassels-Weil-Deligne-Serre}
Let\/ $R$ be an integral domain, with the field of fractions\/~$K$,
and let\/ $\norm{\anon}$ be a~discrete multiplicative norm on\/~$R$,
extended to a~multiplicative norm\/~$\norm{\anon}$ on\/~$K$.
Let\/ $f=f_2+f_1+f_0\in R[X_1,\dots,X_d]$, where\/ $f_i$ is homogeneous of degree\/~$i$,
and\/ $f_2$ is Euclidean with respect to\/~$\norm{\anon}$.
If\/~$f$~has a~zero in\/~$K^d$, then it has a zero in\/~$R^d$.
\end{theorem}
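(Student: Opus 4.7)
The plan is classical infinite descent on a ``denominator'' measure.  For $x\in K^d$, I would let $d(x)$ denote the minimum of $\norm{t}$ over all $t\in R\setminus\set{0}$ with $tx\in R^d$; this minimum exists in $\NN$ and is $\geq 1$, with equality \iff\ $x\in R^d$.  The zero set of $f$ in $K^d$ is nonempty by hypothesis, so I would pick a zero $x$ of $f$ minimizing $d(x)$, and write $x=a/t$ with $a\in R^d$, $t\in R\setminus\set{0}$, $\norm{t}=d(x)$.  If $\norm{t}=1$ then $t$ is a unit, $x\in R^d$, and we are done; otherwise the goal is to produce another zero $x'$ of $f$ with $d(x')<\norm{t}$, contradicting minimality.

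Invoking the Euclidean property of $f_2$, I would choose $y\in R^d$ with $0<\norm{f_2(x-y)}<1$, and set $b\defeq a-ty\in R^d$.  Then $x-y=b/t$, and $s\defeq f_2(b)=t^2 f_2(x-y)$ is a nonzero element of $R$ with $\norm{s}<\norm{t}^2$.  The heart of the construction is the one-variable polynomial
\begin{equation*}
g(T) \Defeq f(y+Tb) \Eq f(y) + \bigl(\bilinform{y}{b}_{\!f_2}+f_1(b)\bigr)T + s\,T^2 \:\in\: R[T]~,
\end{equation*}
whose coefficient of $T^2$ is exactly $s$, thanks to the identity displayed just before the theorem statement.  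Since $y+(1/t)b=a/t=x$, we have $g(1/t)=f(x)=0$; call the second root $T_0$.  Vieta gives $T_0\cdot(1/t)=f(y)/s$, hence $T_0=tf(y)/s$, and by construction $x'\defeq y+T_0 b$ satisfies $f(x')=g(T_0)=0$.

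The crux --- and the step I expect to be the main obstacle --- is to show that $t\mid s$ in $R$; without this the naive denominator of $x'$ is $s$, whose norm is only bounded by $\norm{t}^2$ and so gives no descent.  But the divisibility is forced by the very equation $g(1/t)=0$: multiplying through by $t^2$ rearranges to
\begin{equation*}
s \Eq -t\bigl(tf(y)+\bilinform{y}{b}_{\!f_2}+f_1(b)\bigr)~,
\end{equation*}
exhibiting $s$ as an $R$-multiple of $t$.  Setting $s'\defeq s/t\in R\setminus\set{0}$, we have $T_0=f(y)/s'$, so $s'x'=s'y+f(y)b\in R^d$, which shows $d(x')\leq\norm{s'}=\norm{s}/\norm{t}<\norm{t}$ and delivers the contradiction.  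The lower-degree terms $f_1,f_0$ enter only through the constant and linear coefficients of $g(T)$, so the factorization of $s$ by $t$ is unaffected --- this is exactly why the generalization beyond homogeneous quadratic $f$ costs nothing extra.
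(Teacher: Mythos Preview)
Your argument is correct and is essentially the paper's proof: restrict $f$ to the line through $y$ in the direction $b=a-ty$, use Vieta on the resulting one-variable quadratic, and observe that the relation $g(1/t)=0$ forces $t\mid s$, so the new denominator $s'=s/t$ has strictly smaller norm. The only wrinkle is your claim ``if $\norm{t}=1$ then $t$ is a~unit,'' which is property~(N2) and is not among the hypotheses; the paper's version sidesteps this by phrasing the descent as an iteration that simply halts once the current zero lies in~$R^d$ --- though the paper does remark separately that~(N2) is forced by the existence of a Euclidean form, so your step is in the end justified.
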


\interskip

\begin{proof}
Let $x\in K^d$ be a~zero of~$f$;
if $x\in R^d$, we are done, so we assume that $x\notin R^d$.

We have $x=a/b$ for some $a\in R^d$ and $b\in R\setminus\set{0}$,
and there exists $y\in R^d$ such that $0 < \bignorm{f_2(x-y)} < 1$.
We have $x-y = v/b$ with $v = a-by\in R^d$.
For any $t\in K$ set $F(t)\defeq f(y + tv) = A t^2 + B t + C$,
where the coefficients $A=f_2(v)=f_2(x-y)\tinysp b^2\neq 0$, $C=f(y)$,
	and $B = f(y+v) - A - C$ are~in~$R$;
$\tau\defeq 1/b$ is a~zero of~$F$ because $x = y + v/b$.
Let~$\tau'$~be the other zero~of~$F$.
Since $\tau\tinysp\tau' = C/A$, we have $\tau' = C/\tau A = C/(A/b)$, where $A/b = -B - Cb$ is in~$R$;
since also $A/b = f_2(x-y)\,b$, it follows that $\norm{A/b} = \norm{f_2(x-y)}\norm{b} < \norm{b}$.
The~point $x'\defeq y + \tau'v$ is a~zero of~$f$,
and it can be represented as $x'=a'/b'$,
where $b' = A/b\in R\setminus\set{0}$,
$a' = b'y + Cv\in R^d$, and~$\norm{b'}<\norm{b}$.

If~the~zero~$x'$ of~$f$ is not yet in~$R^d$,
we repeat the procedure and construct another zero $x''=a''/b''$ of~$f$,
where $a''\in R^d$, $b''\in R\setminus\set{0}$, and $\norm{b''}<\norm{b'}$.
And so on.
The sequence $x$, $x'$, $x''$,~\dots\ of zeros of~$f$
eventually ends with a~zero $x^{(s)}\in R^d$~of~$f$.
\end{proof}

\thmskip

Looking at the constructed sequence $x = a/b$, $x' = a'/b'$, $x'' = a''/b''$, \dots\ of zeros of~$f$,
where $b' = f_2(x-y)\cdot b$, $b'' = f_2(x'-y')\cdot b'$, \dots\,,
we see how the Euclidean form $f_2$ forces termination
because the descending sequence of the norms $\norm{b}>\norm{b'}>\norm{b''}>\cdots$ must be finite.
Mark that the final fraction $a^{(s)}/b^{(s)}=x^{(s)}\in R$ may have $\norm{b^{(s)}}>1$.

\thmskip

\begin{corollary}
Let\/ $R$ be an integral domain with a~discrete multiplicative norm\/~$\norm{\anon}$.
If~a~quadratic form\/~$q$ over\/~$R$ is Euclidean with respect to\/~$\norm{\anon}$
then~it~is an ADC form.
\end{corollary}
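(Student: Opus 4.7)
The plan is to reduce the corollary directly to Theorem~\ref{thm:generalized-Aubry-Davenport-Cassels-Weil-Deligne-Serre} by a~trivial translation of~$q$. Given a~point $x\in K^d$ at which $q(x)\in R$, I~would set $c\defeq q(x)$ and consider the quadratic polynomial $f(X)\defeq q(X)-c\in R[X_1,\dots,X_d]$. Its decomposition into homogenous components is $f_2=q$, $f_1=0$, $f_0=-c$; all three pieces live in $R[X_1,\dots,X_d]$ precisely because the ADC hypothesis gives $c\in R$.

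Next, I~would verify that the hypotheses of the theorem are in place for this $f$. The degree-two part $f_2=q$ is Euclidean with respect to $\norm{\anon}$ by the standing assumption on~$q$, and $x$ is visibly a~zero of~$f$ in~$K^d$ since $f(x)=q(x)-c=0$. The theorem then produces a~zero $y\in R^d$ of~$f$, and $f(y)=0$ is exactly the equality $q(y)=c=q(x)$ demanded by the ADC~property. As this works for an arbitrary $x\in K^d$ with $q(x)\in R$, the form $q$ is~ADC.

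There is no genuine obstacle to anticipate; the only point requiring a~moment's attention is the observation that $f_0=-c$ really does belong to~$R$, which is~what the ADC hypothesis buys us. Indeed, the reason Theorem~\ref{thm:generalized-Aubry-Davenport-Cassels-Weil-Deligne-Serre} was stated with the linear term $f_1$ and the constant term $f_0$ included is precisely to make specializations of this shape a~one-line consequence.
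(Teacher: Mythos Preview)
Your proposal is correct and is exactly the paper's own argument, just written out in more detail: the paper's proof is the single line ``Given an arbitrary $r\in R$, apply Theorem~\ref{thm:generalized-Aubry-Davenport-Cassels-Weil-Deligne-Serre} to $q-r$,'' which is precisely your reduction with $r=c=q(x)$. The only cosmetic difference is that the paper phrases it as ranging over all $r\in R$ rather than starting from a~given $x\in K^d$ with $q(x)\in R$, but the content is identical.
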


\interskip

\begin{proof}
Given an arbitrary $r\in R$,
apply Theorem~\ref{thm:generalized-Aubry-Davenport-Cassels-Weil-Deligne-Serre} to $q-r$.
\end{proof}

\inskip

%----------------------------------------------------------------------------------------------------
\section{A remark on a certain property of the norm}

Let $R$ be an integral domain with a~discrete multiplicative norm $\norm{\anon}$.
Suppose there exists a~Euclidean form $g$ over $R$,
in any number $d>0$ of variables and of any degree $m>0$.
Then the norm has the following property:
\begin{items}
\item[(N2)\:] If $x\in R$ has $\norm{x}=1$, then $x$ is a~unit of~$R$.
\end{items}
Indeed, let $a$ be a~non-zero non-unit of $R\tinysp$; we shall prove that $\norm{a}>1$.
The point $x\defeq\tuple{a^{-1},0,\dots,0}=a^{-1}e_1$ lies in $K^d\setminus R^d$,
therefore there exists $y\in R^d$ such that $0 < \norm{g(x-y)} < 1$,
that is, $0 < \norm{g(e_1-ay)} < \norm{a}^m$;
since $\norm{g(e_1-ay)}$ is an integer,
it follows that $\norm{a}^m>1$, whence $\norm{a}>1$.

Though in Theorem~\ref{thm:generalized-Aubry-Davenport-Cassels-Weil-Deligne-Serre}
we do not explicitly assume the property~(N2),
the presence of the~Euclidean quadratic form $f_2$ implies~it.
Theorem~8 in~\cite{PeteLClark} thus unnecessarily assumes the property (N2).%
\footnote{The proof of this theorem refers to the property~(N1),
which is a~typo: property (N2), as defined in~\cite{PeteLClark},
was property (N1) in an earlier draft version of the paper.}

\inskip

%====================================================================================================

\end{document}